\DeclareFontFamily{OT1}{rsfs}{}
\DeclareFontShape{OT1}{rsfs}{n}{it}{<-> rsfs10}{}
\DeclareMathAlphabet{\mathscr}{OT1}{rsfs}{n}{it}
\numberwithin{equation}{section}
\newtheorem{theorem}{Theorem}[section]
\newtheorem{lemma}[theorem]{Lemma}
\newtheorem{corollary}[theorem]{Corollary}
\newtheorem*{maintheorem}{Main Theorem}
\theoremstyle{definition}
\newtheorem{definition}[theorem]{Definition}
\newtheorem{remark}[theorem]{Remark}
\newtheorem{example}[theorem]{Example}
\newcommand{\im}{\operatorname{Im}}
\renewcommand{\ker}{\operatorname{ker}}
\newcommand{\Spec}{\operatorname{Spec}}
\newcommand{\Ext}{\operatorname{Ext}}
\newcommand{\Hom}{\operatorname{Hom}}
\newcommand{\BN}{\mathbf N}
\newcommand{\bD}{\mathbf D}
\newcommand{\fm}{\frak{m}}
\newcommand{\fp}{\frak{p}}
\newcommand{\fq}{\frak{q}}
\newcommand{\fn}{\frak{n}}
\newcommand{\RGamma}{\mathbf{R}\Gamma}
\newcommand{\cA}{\mathcal{A}}
\newcommand{\RMod}{R\text{--mod}}
\def\PSpec{\operatorname{Spec}^\circ\!}
\let\@wraptoccontribs\wraptoccontribs
\begin{document}
\title[Deformation of $F$-injectivity and local cohomology]
{Deformation of $F$-injectivity and local cohomology}

\author[J. Horiuchi]{Jun Horiuchi}
\address{Department of Mathematics, Nippon Institute of Technology, Miyashiro, Saitama
345-8501, Japan}
\email{jhoriuchi.math@gmail.com}
 
\author[L.E. Miller]{Lance Edward Miller}
\address{Department of Mathematics, Department of Mathematics, 301 SCEN, University of Arkansas, Fayetteville, AR~72701,~USA}
\email{lem016@uark.edu}
\thanks{The second author was supported in part by National Science Foundation VIGRE Grant \# 0602219.}

\author[K. Shimomoto]{Kazuma Shimomoto}
\address{Department of Mathematics, School of Science and Technology, Meiji University, 1-1-1 Higashimita, Tama-Ku, Kawasaki 214-8571, Japan}
\email{shimomotokazuma@gmail.com}
\thanks{The third author was supported in part by Grant-in-Aid for Research Activity Start-up Grant \# 22840042}

\contrib[with an appendix by]{Karl Schwede}
\address{Department of Mathematics, The Pennsylvania State University, University Park, PA~$16802$, USA}
\email{schwede@math.psu.edu}

\contrib[]{Anurag K. Singh}
\address{Department of Mathematics, University of Utah, 155 S 1400 E, Salt Lake City, UT~84112,~USA}
\email{singh@math.utah.edu}

\thanks{The fourth author was supported by NSF grant~\# 1064485 and a Sloan Research Fellowship.} \thanks{The fifth author was supported by NSF grant DMS~ \# 1162585.}

%

\subjclass[2010]{ 13A35, 14B05, 14B07}

\keywords{$F$-injective, Frobenius map, local cohomology, deformation}


\begin{abstract} 
We give a sufficient condition for $F$-injectivity to deform in terms of local cohomology. We show this condition is met in two geometrically interesting setting, namely when the special fiber has isolated non-CM locus or is $F$-split. 
\end{abstract}

\maketitle

\section{Introduction}

A central and interesting question in the study of singularities is how they behave under deformation. Given a local ring of positive characteristic, view this ring as the total space of a fibration. The special fiber of this fibration is a hypersurface in $R$, i.e., a variety with coordinate ring $R/xR$ where $x \in R$ is a regular element. An important question is whether or not the singularity type of the total space $R$ is no worse than the singularity type as the special fiber. This deformation question has been studied in detail for singularities defined by Frobenius \cite{Fed,Singhb} where it is noted that $F$-rationality always deforms and both $F$-purity and $F$-regularity fail to deform in general. An important and outstanding conjecture asserts that $F$-injectivity deforms in general. Recall that a local ring $(R, \fm)$ of prime characteristic $p>0$ is \textit{F-injective} provided the Frobenius action on the local cohomology $H^i_{\fm}(R)$ induced by the Frobenius map on $R$, is injective for all $i \ge 0$. The general conjecture is supported by recent work showing the characteristic $0$ analogue of $F$-injective singularities, called Du Bois singularities, deform \cite{KoSch1}.  When $R$ is Cohen-Macaualy, it is known that $F$-injectivity deforms \cite{Fed}. Our main theorem describes a condition sufficient to guarantee $F$-injective to deform which only requires information about the special fiber and not the total space.

\begin{maintheorem}
$($c.f., Theorem \ref{thm:MainTheorem}$)$
Let $(R,\fm,k)$ be a local ring of prime characteristic $p>0$ and $x \in \fm$ a regular element. If $R/xR$ is $F$-injective and for each $\ell > 0$ and $i \geq 0$ the homomrphism $H_\fm^i(R/x^\ell R) \to H_\fm^i(R/xR)$ induced by the natural surjection $R/x^\ell R \to R/xR$, is surjective, then $R$ is $F$-injective. 
\end{maintheorem}

We show in particular that this hypothesis is satisfied when the length of the local cohomology modules $H_\fm^i(R/xR)$ is finite for $i < \dim R-1$; a condition called {\it finite length cohomology} (FLC). Geometrically, it is the condition that the non Cohen-Macaualy locus of $R/xR$ is isolated and this combination shows that $F$-injectivity deforms under mild geometric criteria in low dimensions, see Corollary \ref{cor:FinjGeomlowdim}.

\begin{maintheorem}
$($c.f., Corollary \ref{cor:FLCFINJ}$)$
Let $(R,\fm,k)$ be a local ring of characteristic $p>0$ with perfect residue field and $x \in \fm$ a regular element. If $R/xR$ has FLC and is $F$-injective, then $R$ is $F$-injective. 
\end{maintheorem}

Utilizing a sharper study of Frobenius actions on local cohomology, we can state our condition in terms of the condition \textit{anti-nilpotency}. Using results of L. Ma, we demonstrate a deformation theoretic relationship between $F$-injectivity and $F$-splitting.

\begin{maintheorem}
$($Corollary \ref{cor:FsplitFinj}$)$
 Let $(R,\fm,k)$ be a local ring of characteristic $p > 0$ and $x \in \fm$ a regular element. If $R/xR$ is $F$-split, then $R$ is $F$-injective. 
\end{maintheorem}

The authors thank Takeshi Kawasaki, Karl Schwede, and Anurag Singh for many helpful discussions and careful readings of this manuscript as well as Alberto F. Boix  for suggestions leading to improvements in the manuscript and an alternate proof of Lemma~\ref{lem:SS}. Special thanks also goes to Linquan Ma for finding an error in a previous version of Theorem~\ref{thm:surjRspan}. We also thank the referee for comments and suggestions leading to further improvement. 

\

\noindent {\it Conventions:} Unless otherwise stated all rings are noetherian and of characteristic $p > 0$ where $p$ is a prime integer.

\section{Preliminaries and Notations}

\subsection{Notation}
For a ring $R$ of characteristic $p > 0$, the Frobenius map is the map $F \colon R \to R$ sending an element to its $p$-th power. For an $R$-module $M$, denote by $F_*M = \{ F_*m \colon m \in M\}$. This module is called the \textit{Frobenius pushforward} of $M$. As abelian groups $M \cong F_*M$, but its $R$-module structure is twisted by Frobenius. In particular, if  $r \in R$ and $F_*m \in F_*M$, then $r \cdot F_*m= F_* (r^pm)$. We also denote the $e$-th iterate of the Frobenius pushforward of $M$ by $F_*^eM$. The functor $F_*^e$ is exact and commutes with localization.

\subsection{Local cohomology}

For a more complete introduction see \cite{ILL}. Fix a ring $R$ and an ideal $I$. Let $M$ be an $R$-module; not necessarily noetherian. The local cohomology module supported at $I$ is $H_I^i(M) = \varinjlim_{t} \Ext_R^i(R/I^t,M)$. When $I$ is generated up to radical by $g_1,\ldots,g_n$, one may compute $H_I^i(M)$ as the $i$-th cohomology of the \v{C}ech complex with respect to $I$, denoted $\check{C}^\bullet(M;I)$; 
$$0 \to M \to \oplus_i M_{g_i} \to \oplus_{i < j} M_{g_ig_j} \to \cdots \to M_{g_1\cdots g_n} \to 0.$$

We briefly discuss iterated local cohomology as it plays a role in the proof of Theorem \ref{thm:MainTheorem}. For more detail see \cite{Har67}. Given two ideals $I$ and $J$ in $R$, and an $R$-module $M$, let $\check{C}^\bullet (M; I)$ (resp. $\check{C}^\bullet(M;J)$) be the \v{C}ech complex of $M$ with respect to $I$ (resp. with respect to $J$). Considering $\check{C}^\bullet(M;I)$ as the horizontal complex and $\check{C}^\bullet(M;J)$ as the vertical complex, one obtains a double complex $C^{\bullet \bullet} = \check{C}^\bullet(M;I) \otimes_R \check{C}^\bullet(M;J)$. This double complex is the first page of a spectral sequence $E_0^{p,q}$, called the {\it local cohomology spectral sequence}. For more on spectral sequences see \cite{Wei94}. The convergence of this spectral sequence is known.

\begin{theorem}
$($Convergence of local cohomology spectral sequence \cite[Prop. 1.4]{Har67}$)$
\label{thm:LCSS}
For $I$ and $J$ ideals in a ring $R$ and $M$ an $R$-module, 
$$
E_2^{p,q} = H_J^p(H_I^q(M)) \Rightarrow E_\infty^{p,q}  = H_{I + J}^{p+q}(M).
$$
\end{theorem}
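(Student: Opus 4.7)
The plan is to realize $H^\bullet_{I+J}(M)$ as the cohomology of the total complex $\Tot(C^{\bullet\bullet})$ of the double complex in the statement, and then invoke the standard spectral sequence of a bounded double complex.

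First I would fix generating sets $I = \sqrt{(g_1,\ldots,g_n)}$ and $J = \sqrt{(h_1,\ldots,h_m)}$, so that $I+J$ is generated up to radical by the concatenation $g_1,\ldots,g_n,h_1,\ldots,h_m$. The first key step is a combinatorial identification of the Čech complex of the concatenated sequence as the total complex of a tensor product: grouping the localizations appearing in $\check{C}^\bullet(M;I+J)$ by how many $g$'s versus how many $h$'s occur in each index, one gets a natural isomorphism
\[
\check{C}^\bullet(M;I+J) \;\cong\; \Tot\bigl(\check{C}^\bullet(R;I)\otimes_R \check{C}^\bullet(R;J)\bigr)\otimes_R M \;=\; \Tot(C^{\bullet\bullet}),
\]
provided the Koszul sign on the tensor product differential matches the alternating sign of the Čech differential for the concatenated sequence. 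This identifies $H^n(\Tot\,C^{\bullet\bullet}) \cong H^n_{I+J}(M)$.

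Next I would compute the $E_2$-page of one of the two spectral sequences of $C^{\bullet\bullet}$, specifically the one that takes cohomology in the $I$-direction first. Each term $\check{C}^j(R;J)$ is a finite direct sum of localizations of $R$ and is therefore flat, so the functor $(-)\otimes_R \check{C}^j(R;J)$ is exact. Consequently horizontal cohomology of the row at vertical height $j$ is $H^i_I(M)\otimes_R \check{C}^j(R;J)$; applied row by row this is precisely the Čech complex for $J$ with coefficients in the $R$-module $H^i_I(M)$. Taking cohomology in the $J$-direction then yields $H^j_J(H^i_I(M))$, which after the customary reindexing is the stated $E_2^{p,q} = H^p_J(H^q_I(M))$.

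Convergence is automatic: since both Čech complexes are bounded (lengths $n$ and $m$), the double complex is a bounded region of the plane, the induced filtration on the total complex is finite, and the spectral sequence degenerates on a finite page and converges strongly to $H^{p+q}(\Tot\,C^{\bullet\bullet}) = H^{p+q}_{I+J}(M)$. I expect the only real obstacle to be the sign-matching in the total-complex identification, which is a direct but fiddly verification; once that is in hand, the remainder is the standard machinery of the spectral sequence of a bounded first-quadrant double complex combined with the flatness of Čech terms.
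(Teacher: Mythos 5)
The paper does not actually prove this theorem---it cites \cite[Prop. 1.4]{Har67} and says ``the convergence is known''---so there is no in-text proof to compare against. Your argument is a correct, self-contained proof, and it fleshes out exactly the \v{C}ech double-complex picture the paper sets up immediately before the theorem statement: identify $\Tot(C^{\bullet\bullet})$ (up to Koszul signs) with $\check{C}^\bullet(M;I+J)$ via the concatenated generating set, use flatness of the \v{C}ech terms $\check{C}^j(R;J)$ to push cohomology through the tensor and recognize the $E_2$-page as iterated local cohomology, and invoke boundedness of the first-quadrant double complex for strong convergence. One small point worth making explicit in a final writeup: the step where row cohomology becomes ``the \v{C}ech complex for $J$ with coefficients in $H^i_I(M)$'' uses that \v{C}ech cohomology computes local cohomology for arbitrary (not necessarily finitely generated) modules over a Noetherian ring, which is standard but should be flagged since $H^i_I(M)$ is rarely finitely generated. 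The paper also records a second route it prefers conceptually---$\RGamma_J\circ\RGamma_I \cong \RGamma_{I+J}$ together with the Grothendieck spectral sequence \cite[Cor. 10.8.3]{Wei94}---which sidesteps the sign bookkeeping you flag as the fiddly part and applies verbatim to complexes of modules; your \v{C}ech version is more concrete and avoids injective resolutions. Either is acceptable; the paper simply cites the result rather than reproving it.
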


Using this theorem, it is easy to compute an isomorphism that we need.

\begin{lemma}
\label{lem:SS}
Let $(R,\fm,k)$ be a local ring and $M$ an $R$-module. If $x \in \fm$ is a regular element, then for all $i \geq 0$, $$H_\fm^i( H_{(x)}^1(M)) \cong H_\fm^{i+1}(M).$$ 
\end{lemma}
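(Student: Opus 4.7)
The plan is to apply the local cohomology spectral sequence (Theorem~\ref{thm:LCSS}) with the two ideals chosen as $I = (x)$ and $J = \fm$. Since $x \in \fm$, we have $I + J = \fm$, so the abutment of the spectral sequence is
$$E_\infty^{p,q} = H_\fm^{p+q}(R).$$

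The key observation is that the $E_2$-page collapses to a single row. Because $x$ is a nonzerodivisor on $R$, no nonzero element of $R$ is annihilated by a power of $x$, giving $H_{(x)}^0(R) = 0$. Since $(x)$ is a principal ideal, the \v{C}ech complex on $x$ has length one, so $H_{(x)}^q(R) = 0$ for all $q \geq 2$ as well. Hence $H_{(x)}^q(R)$ is nonzero only in degree $q = 1$, and therefore
$$E_2^{p,q} = H_\fm^p\!\left(H_{(x)}^q(R)\right) = 0 \quad \text{for } q \neq 1.$$

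With $E_2$ concentrated in a single row, every differential $d_r$ for $r \geq 2$ has either source or target zero (the differentials go $E_r^{p,q} \to E_r^{p+r,q-r+1}$, changing the $q$-coordinate). Thus the spectral sequence degenerates at $E_2$, yielding
$$H_\fm^p\!\left(H_{(x)}^1(R)\right) = E_2^{p,1} = E_\infty^{p,1} = H_\fm^{p+1}(R),$$
which is the desired isomorphism (with $i = p$). There is no serious obstacle here: once the vanishing of $H_{(x)}^q(R)$ outside $q=1$ is recorded, the conclusion is immediate from the shape of the spectral sequence, and the only thing to be careful about is the index shift between the $E_\infty$-term and the cohomology of $R$.
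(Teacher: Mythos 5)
Your proof is correct and follows essentially the same route as the paper's: apply the local cohomology spectral sequence with $I = (x)$, $J = \fm$, note $H_{(x)}^q(R)$ is concentrated in degree $q = 1$ so the $E_2$-page collapses and $E_2^{p,1} = E_\infty^{p,1} = H_\fm^{p+1}(R)$. You give slightly more detail than the paper in justifying the degeneration via the vanishing of the differentials, but the argument is the same.
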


\begin{proof}
First, note that $H_{(x)}^q(M)$ is nonzero only when $q = 1$. Thus the $E_2^{p,q}$ page of the spectral sequence computing the double complex $H_\fm^\bullet(H_{(x)}^\bullet(-))$ degenerates. By Theorem~\ref{thm:LCSS}, $E_2^{p,q} = H_\fm^p(H_{(x)}^q(M))$ and $E_\infty^{p,q} = H_\fm^{p + q}(M)$ for all $p \geq 0$ and $q \geq 0$. Since the sequence degenerates at the $E_2^{p,q}$ page, we have $H_\fm^p(H_{(x)}^q(M)) = E_2^{p,q} = E_\infty^{p,q} = H_\fm^{p + q}(M)$ for all $p \geq 0$ and $q \geq 0$. Applying this with $p = i$ and $ q = 1$ gives the result. 
\end{proof}

We also offer a second proof of Lemma~\ref{lem:SS} free of spectral sequences due to Alberto Boix. 

\begin{proof}(proof of Lemma~\ref{lem:SS} with thanks to Alberto Boix)
Note the following exact sequence of $R$-modules $$0 \to \Gamma_{(x)}(M) \to M \to M_{x} \to H^1_{(x)}(M) \to 0.$$ Since $x$ is not a zero divisor, $\Gamma_{(x)}(M) = 0$ so we have a short exact sequence $$0 \to M \to M_{x} \to H_{(x)}^1(M) \to 0.$$ Taking $H^i_\fm$ induces a long exact sequence $$H_\fm^i(M_{x}) \to H_\fm^i(H_{x}^1(M))\to H_\fm^{i+1}(M) \to H_\fm^{i+1}(M_x).$$ One may then apply flat base change to check for all $j \geq 0$ that $H_\fm^j(M_x) \cong H_{\fm R_x}^j(M_x)$ and since $x \in \fm$ the extension $\fm R_x = R_x$. Thus $H_\fm^j(M_x) \cong H_{R_x}^j(M_x) = 0$. This gives the desired result. 
\end{proof}

\begin{remark}
Neither proof of Lemma~\ref{lem:SS} depends on the local cohomology being supported in the maximal ideal, only that the regular element $x$ is a member of the ideal of support for the local cohomology modules in question. 
\end{remark}

It is often easier to study spectral sequences as composition of derived functors; see \cite{Lip02} for explicit details about derived categories and local cohomology. We summarize what we need. For an abelian category $\cA$, denote by $K(\cA)$ the category of complexes in $\cA$ up to homotopic equivalence and $\bD(\cA)$ its derived category. For $R$ a ring, denote by $\RMod$ the category of $R$-modules. Let $I \subseteq R$ an ideal and $\cA = \RMod$. One realizes the $i$-th local cohomology module with support in $I$ as a functor $H_I^i \colon K(\RMod) \to \RMod$ which takes quasi-isomorphisms in $K(\RMod)$ to isomorphisms in $\RMod$ and so it can be regarded as a functor on $\bD(\RMod)$. Denote by $\Gamma_I$ the $I$-torsion functor. The right derived functor $\RGamma_I \colon \bD(\RMod) \to \bD(\RMod)$ has the information of taking all of the local cohomology modules $H_I^i$ at once and each $H_I^i$ can be recovered in a functorial way from $\bD(\RMod)$ by taking the $i$-th cohomology of the image of $\RGamma_I$. The spectral sequence in Theorem \ref{thm:LCSS} can be understood as a consequence of the Grothendieck spectral sequence theorem \cite[Cor. 10.8.3]{Wei94} stating that $\RGamma_I \circ \RGamma_J \cong \RGamma_{I + J}$. This equivalence will be utilized in Theorem \ref{thm:MainTheorem}.

\subsection{Frobenius linear maps}

Frobenius linear maps are a central tool in our approach. These are thoroughly explored in \cite{HarSp} under the name $p$-linear maps. We review the topic.

\begin{definition}
Let $R$ be a commutative ring of characteristic $p$. For $R$-modules $M$ and $N$, a {\it Frobenius linear map} is an element of $\Hom_R(M,F_* N)$. More specifically, it is an additive map $\rho \colon M \to F_*M$  such that $\rho(ra)=r^p\rho(a)$ for any $r \in R$ and $a \in M$. If $M=N$, we call $\rho \colon M \to F_* M$ a \textit{Frobenius action} on $M$. 
\end{definition}

Since $F_*$ commutes with localization, given a Frobenius linear map between $M$ and $N$ there is an induced Frobenius linear map $H_\fm^i(M) \to F_* H_\fm^i(N)$ for each $i \geq 0$. One can make this explicit utilizing \v{C}ech resolutions as in Example \ref{xmp:Cech}. Any Frobenius linear map $\rho \colon M \to F_* N$ induces a morphism $\RGamma_I( \rho) \colon \RGamma_I (M) \to \RGamma_I (F_* N) \cong F_* \RGamma_I (N)$ where $I \subseteq R$ is an ideal and the last isomorphism follows as $F_*$ is exact. In particular, the Frobenius map on $R$, thought of as a Frobenius action $\rho_F \colon R \to F_* R$, induces a natural Frobenius action on the local cohomology 
$$
\RGamma_I(\rho_R) \colon \RGamma_I(R) \to F_* \RGamma_I(R).
$$
This Frobenius action can be computed explicitly using \v{C}ech complexes.

\begin{example}
\label{xmp:Cech}
Consider $(R,\fm,k)$ a local ring with $x \in \fm$ a regular element. Each term of the \v{C}ech complex 
$$
0 \to R \to R_x \to 0
$$ 
has a Frobenius linear map induced from the Frobenius on $R$. Therefore we have a commutative diagram 

\begin{center}
\begin{tikzpicture}[baseline=(current bounding box.center)]

\node (RT0) at (-4,1) {$0$};
\node (RT1) at (-1.5,1){$R$};
\node (RT2) at (1.5,1){$R_x$};
\node (RT3) at (4,1){$0$};

\node (RB0) at (-4,-1){$0$};
\node (RB1) at (-1.5,-1){$F_*R$};
\node (RB2) at (1.5,-1){$F_*(R_x)$};
\node (RB3) at (4,-1) {$0$};

\draw[->] (RT0) edge  (RB0);
\draw[->] (RT1) edge node[left]{$\rho_{F}$} (RB1);
\draw[->] (RT2) edge node[left]{$\rho_{F}$} (RB2);
\draw[->] (RT3) edge  (RB3);

\draw[->] (RT0) edge (RT1);
\draw[->] (RT1) edge (RT2);
\draw[->] (RT2) edge (RT3);

\draw[->] (RB0) edge (RB1);
\draw[->] (RB1) edge (RB2);
\draw[->] (RB2) edge (RB3);

\end{tikzpicture}
\end{center}

Of course $H_{(x)}^0(R) = 0$ and $H_{(x)}^1(R) = R_x/R$. Since $F_*$ commutes with localization, it also commutes with local cohomology. Therefore, we have a natural Frobenius action $\rho$ on the $R$-module $H_{(x)}^1(R) = R_x/R$. In particular, $\rho \colon H_{(x)}^i(R) \to F_* H_{(x)}^i(R)$ is just the natural Frobenius $R_x/R \to F_*(R_x/R)$. 
\end{example}

We see immediately the benefit of studying Frobenius linear maps on finite length modules when the residue field is perfect.

\begin{lemma}
\label{lemma2}
Let $(R,\fm,k)$ be a local ring of prime characteristic $p>0$ with perfect residue field and let $M$ be an $R$-module  admitting an injective Frobenius action $\rho$. If $M$ has finite length, then $M$ is a finite dimensional $k$-vector space and $\rho$ is a bijection.
\end{lemma}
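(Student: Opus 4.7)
The plan is to first show that $\fm M = 0$, which (together with the finite length hypothesis) will force $M$ to be a finite-dimensional $k$-vector space, and then to leverage the perfectness of $k$ to promote the injective $k$-linear map $\rho$ to a bijection by a dimension count. The main subtlety is that the Frobenius action $\rho$ is not $R$-linear in the usual sense; it raises coefficients to the $p$-th power, and it is precisely this feature that will collapse the $\fm$-adic filtration on $M$.

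First I would argue $\fm M = 0$. Since $M$ has finite length, $\fm^n M = 0$ for some $n$. Suppose for contradiction that $\fm M \neq 0$ and let $i \geq 1$ be the largest integer with $\fm^i M \neq 0$, so $\fm^{i+1} M = 0$. Pick any $0 \neq y \in \fm^i M$ and write $y = \sum_j r_j m_j$ with $r_j \in \fm^i$ and $m_j \in M$. Using Frobenius linearity,
\[
\rho(y) \;=\; \sum_j \rho(r_j m_j) \;=\; \sum_j r_j^p \, \rho(m_j),
\]
viewing the sum inside $F_*M$. Since $r_j^p \in \fm^{ip} \subseteq \fm^{i+1}$ (as $p \geq 2$ and $i \geq 1$), each summand lies in $\fm^{i+1} M = 0$, so $\rho(y) = 0$, contradicting injectivity of $\rho$. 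Hence $\fm M = 0$, so $M$ is a $k$-vector space, and by finite length it is finite-dimensional.

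Next I would show $\dim_k F_*M = \dim_k M$ using that $k$ is perfect. If $e_1, \dots, e_n$ is a $k$-basis of $M$, then the same elements form a $k$-basis of $F_*M$ under the twisted action $c \cdot_{F_*} m = c^p m$: every element of $F_*M = M$ can be written as $\sum a_i e_i$ with $a_i \in k$, and perfection of $k$ lets us solve $a_i = c_i^p$ uniquely, giving $\sum c_i \cdot_{F_*} e_i$ uniquely, and linear independence is immediate from injectivity of Frobenius on $k$. Finally, since $\fm$ annihilates both $M$ and $F_*M$, the $R$-linear map $\rho \colon M \to F_*M$ descends to a $k$-linear map between $k$-vector spaces of the same finite dimension; being injective, it is an isomorphism. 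The only mildly delicate point in the whole argument is the filtration step for $\fm M = 0$; once that is in place, the rest is formal from perfectness.
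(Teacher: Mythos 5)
Your proof is correct and follows essentially the same strategy as the paper: show $\fm M = 0$ by exploiting that Frobenius pushes $\fm$-adic order up by a factor of $p$, then use perfectness of $k$ and a dimension count. The only cosmetic difference is that the paper kills a fixed $c \in \fm$ by iterating $\rho$ until $c^{p^e} M = 0$, whereas you kill the top nonzero piece $\fm^i M$ with a single application of $\rho$; you also spell out the dimension count $\dim_k F_*M = \dim_k M$ that the paper leaves implicit.
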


\begin{proof}
Since $M$ has finite length, there exists $\ell>0$ such that $\fm^{\ell} \cdot M=0$. Fix $c \in \fm$. One has $\rho^e(c \cdot M)=c^{p^e} \cdot \rho(M)=0$ for $p^e \ge \ell$. Since $\rho$ is injective, $c \cdot M=0$. Therefore, $M$ is a finite dimensional $k$-vector space and $\rho$ descends to an additive map on $M=M/\fm M$. Now since $k$ is perfect and $M$ is finite dimensional as $M$ has finite length and $\rho$ is injective, $\rho$ must be bijective. 
\end{proof}

\begin{remark} The perfectness of the residue field in Lemma \ref{lemma2} is necessary. In the case, $R = k$, the natural Frobenius action on the simple $k$-module $k$ is bijective if and only if $k$ is perfect. See also \cite[Cor. 7.7 and Prop. 7.12]{Ene} for similar discussion.
\end{remark}





\section{Proof of the main theorem}

We start with the following notation defining the key property about a regular element that we need to guarantee that $F$-injective deforms.

\begin{definition}
Let $(R,\fm)$ be a local ring with $x \in \fm $ a regular element. We say that $x$ is a \textit{surjective element}, if the map on local cohomology $H^i_{\fm}(R/x^{\ell}R) \to H^i_{\fm}(R/xR)$, which is induced by the natural surjection $R/x^\ell R \to R/xR$, is surjective for all $\ell>0$ and $i \ge 0$.
\end{definition}

We immediately see that surjective elements induce {\it injections} between specific local cohomology modules. 

\begin{lemma}
\label{cor:inj}
Let $(R,\fm)$ be a local ring of arbitrary characteristic. Assume that $x \in \fm$ is a surjective element. For each $\ell > 0$ and $j \geq \ell$, the multiplication map 
$$
R/x^\ell R \xrightarrow{x^{j - \ell} } R/x^jR
$$ 
induces an injection $H_\fm^i(R/x^\ell R) \to H_\fm^i(R/x^jR)$ for each $i \ge 0$. 
\end{lemma}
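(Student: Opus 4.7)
The plan is to reduce the claim to the case $j = \ell + 1$, and then to extract the single-step injectivity directly from the surjective element hypothesis via a long exact sequence in local cohomology. The multiplication map $R/x^\ell R \xrightarrow{\cdot x^{j-\ell}} R/x^j R$ factors as a tower
$$R/x^\ell R \xrightarrow{\cdot x} R/x^{\ell+1} R \xrightarrow{\cdot x} \cdots \xrightarrow{\cdot x} R/x^j R,$$
so if each consecutive step $R/x^k R \xrightarrow{\cdot x} R/x^{k+1} R$ induces an injection on $H_\fm^i$ for every $k \ge 1$ and every $i \ge 0$, then so does $\cdot x^{j-\ell}$, since a composition of injections is injective.

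For the single-step claim I would use the short exact sequence
$$0 \to R/x^k R \xrightarrow{\cdot x} R/x^{k+1} R \to R/xR \to 0,$$
whose left-hand map is injective because $x$ is a non zero divisor on $R$, and whose cokernel is identified with $R/xR$ via the natural surjection $R/x^{k+1}R \twoheadrightarrow R/xR$. The associated long exact sequence of local cohomology contains the fragment
$$H_\fm^{i-1}(R/x^{k+1}R) \to H_\fm^{i-1}(R/xR) \xrightarrow{\delta} H_\fm^i(R/x^k R) \xrightarrow{\cdot x} H_\fm^i(R/x^{k+1}R),$$
and injectivity of the right-hand $\cdot x$ at degree $i$ is equivalent, by exactness, to $\delta = 0$, which in turn is equivalent to surjectivity of the preceding map $H_\fm^{i-1}(R/x^{k+1}R) \to H_\fm^{i-1}(R/xR)$. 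For $i \ge 1$ this surjectivity is precisely the hypothesis that $x$ is a surjective element, applied with $\ell = k+1$ and cohomological degree $i-1$. For $i = 0$ it holds trivially since $H_\fm^{-1}(R/xR) = 0$.

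The only thing that requires care is the identification of the cokernel of $\cdot x : R/x^k R \to R/x^{k+1}R$ with $R/xR$ via the natural surjection, so that the map $H_\fm^{i-1}(R/x^{k+1}R) \to H_\fm^{i-1}(R/xR)$ appearing in the long exact sequence is exactly the one covered by the surjective element hypothesis. Beyond that bookkeeping, the argument is essentially a clean deduction from the long exact sequence. Notably this deduction does not visibly use the $F$-injectivity of $R/xR$ nor the perfectness of the residue field; those assumptions appear to be present for the sake of the downstream application to Theorem~\ref{thm:MainTheorem}, where Lemma~\ref{lemma2} is invoked in tandem with the conclusion of this lemma, rather than for this specific injectivity statement itself.
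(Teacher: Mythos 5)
Your proof is essentially the same as the paper's: reduce to $j = \ell+1$, take the long exact sequence in local cohomology for $0 \to R/x^k R \xrightarrow{\cdot x} R/x^{k+1}R \to R/xR \to 0$, and observe that the surjective-element hypothesis kills the connecting map $\delta$, forcing the next map to be injective. Your added remarks — that the cokernel identification with $R/xR$ is the natural surjection (so the hypothesis genuinely applies) and that neither $F$-injectivity nor perfectness of $k$ is used in this particular lemma — are correct and worth keeping.
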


\begin{proof}
Note that $R/x^\ell R \xrightarrow{x^{j - \ell} } R/x^jR$ is injective and it suffices by induction to prove the lemma when $j=\ell+1$. The short exact sequence $$0 \to R/x^{\ell} R \xrightarrow{\cdot x} R/x^{\ell + 1} R \to R/xR \to 0$$ induces the following exact portion of the long exact sequence  
$$
H_\fm^{i-1}(R/x^{\ell+1} R) \xrightarrow{\beta_1} H_\fm^{i-1}(R/xR) \xrightarrow{\delta} H_\fm^i(R/x^{\ell } R) \xrightarrow{\beta_2} H_\fm^i(R/x^{\ell + 1}R).
$$ 
Since $x$ is a surjective element, $\beta_1$ is surjective and hence $\delta$ is the zero map. This makes $\beta_2$ injective as desired.
\end{proof}

\begin{theorem}
\label{theorem1}
Let $(R,\fm,k)$ be a local ring of characteristic $p > 0$ and let $x \in \fm$ be a surjective element. Assume that $R/xR$ is $F$-injective and denote by 
$$
\rho_{\ell,i} \colon H_\fm^i(R/x^\ell R) \to F_* H_\fm^i(R/x^{p \ell}R),
$$ 
the Frobenius linear map induced by the natural Frobenius map $\rho_F:R/x^\ell R \to F_*(R/x^{p \ell} R)$. For each $\ell > 0$ and $i \ge 0$, the map $\rho_{\ell,i}$ is injective.
\end{theorem}

\begin{proof}
For every $\ell > 0$, the natural Frobenius map on $R/x^\ell R$ is a composition of $\rho_F$ and a natural surjection $\pi$, i.e., 
$$
R/x^\ell R \xrightarrow{\rho_F} F_*(R/ x^{p \ell } R) \xrightarrow{\pi} F_*(R/ x^\ell R).
$$ 
Denote by $\rho_{\ell, i} \colon H_\fm^i(R/x^\ell R) \to F_* H_\fm^i(R/x^{p \ell } R)$ the Frobenius linear map induced by $\rho_F$. We proceed by induction on $\ell$ to show that $\rho_{\ell,i}$ is injective for all $\ell > 0$. The case $\ell = 1$ is assured by hypothesis.

Assume $\ell >1$ and consider the commutative diagram of $R$-modules with exact rows
\begin{equation}\label{eq:diag1}
\begin{tikzpicture}[baseline=(current bounding box.center)]

\node (RT0) at (-10,1) {$0$};
\node (RT1) at (-7,1){$R/x^{\ell - 1}R$};
\node (RT2) at (-3,1){$R/x^\ell R$};
\node (RT3) at (1,1){$R/xR$};
\node (RT4) at (3,1){$0$};

\node (RB0) at (-10,-1){$0$};
\node (RB1) at (-7,-1){$F_*(R/x^{p(\ell -1)}R)$};
\node (RB2) at (-3,-1){$F_*(R/x^{p \ell}R)  $};
\node (RB3) at (1,-1) {$F_*(R/x^pR) $};
\node (RB4) at (3,-1) {$0 $};

\draw[->] (RT1) edge  (RB1);
\draw[->] (RT2) edge  (RB2);
\draw[->] (RT3) edge  (RB3);

\draw[->] (RT0) edge (RT1);
\draw[->] (RT1) edge  node[above]{$ \cdot x $}(RT2);
\draw[->] (RT2) edge (RT3);
\draw[->] (RT3) edge (RT4);

\draw[->] (RB0) edge (RB1);
\draw[->] (RB1) edge  node[above]{$ \cdot x^p $}(RB2);
\draw[->] (RB2) edge (RB3);
\draw[->] (RB3) edge (RB4);

\end{tikzpicture}
\end{equation}
where all vertical maps are the natural Frobenius linear maps. This induces the following commutative diagram of $R$-modules

{ \footnotesize{
\begin{equation}\label{eq:diag2}
\begin{tikzpicture}[baseline=(current bounding box.center)]

\node (RT0) at (-11,1) {$H^{i-1}_{\fm}(R/xR)$};
\node (RT1) at (-7,1){$H^i_{\fm}(R/x^{\ell-1}R)$};
\node (RT2) at (-3,1){$H^i_{\fm}(R/x^{\ell}R)$};
\node (RT3) at (1,1){$H^i_{\fm}(R/xR)$};

\node (RB0) at (-11,-1){$F_*H^{i-1}_{\fm}(R/x^pR)$};
\node (RB1) at (-7,-1){$F_*H^i_{\fm}(R/x^{p(\ell-1)}R) $};
\node (RB2) at (-3,-1){$F_*H^i_{\fm}(R/x^{p\ell}R)  $};
\node (RB3) at (1,-1) {$F_*H^i_{\fm}(R/x^pR) $};

\draw[->] (RT0) edge node[left]{$\rho_{1, i-1}$} (RB0);
\draw[->] (RT1) edge node[left]{$\rho_{\ell-1, i}$} (RB1);
\draw[->] (RT2) edge node[left]{$\rho_{\ell, i}$} (RB2);
\draw[->] (RT3) edge node[left]{$\rho_{1, i}$} (RB3);

\draw[->] (RT0) edge (RT1);
\draw[->] (RT1) edge (RT2);
\draw[->] (RT2) edge node[above]{$\alpha$}(RT3);

\draw[->] (RB0) edge node[above]{$F_* \delta_{i-1}$}(RB1);
\draw[->] (RB1) edge node[above]{$F_* \beta$}(RB2);
\draw[->] (RB2) edge (RB3);

\end{tikzpicture}
\end{equation}
}}

The map $\alpha:H^i_{\fm}(R/x^{\ell}R) \to H^i_{\fm}(R/xR)$ is surjective, since $x$ is a surjective element by assumption. From Lemma \ref{cor:inj} and that $F_*$ is exact, the map $F_* \beta$ is injective. Hence $F_* \delta_{i-1}$ is the zero map. Thus we have a commutative diagram

{ \footnotesize{
\begin{equation}\label{eq:diag3}
\begin{tikzpicture}[baseline=(current bounding box.center)]

\node (RT1) at (-7,1){$H^i_{\fm}(R/x^{\ell-1}R)$};
\node (RT2) at (-3,1){$H^i_{\fm}(R/x^{\ell}R) $};
\node (RT3) at (1,1){$H^i_{\fm}(R/xR)$};
\node (Z1) at (4,1) {$0$};

\node (Z2) at (-10,-1) {$0$};
\node (RB1) at (-7,-1){$F_*H^i_{\fm}(R/x^{p(\ell-1)}R)$};
\node (RB2) at (-3,-1){$F_*H^i_{\fm}(R/x^{p\ell}R) $};
\node (RB3) at (1,-1){$F_*H^i_{\fm}(R/x^pR)  $};

\draw[->] (RT1) edge node[left]{$\rho_{\ell-1, i}$} (RB1);
\draw[->] (RT2) edge node[left]{$\rho_{\ell, i}$} (RB2);
\draw[->] (RT3) edge node[left]{$\rho_{1, i}$} (RB3);

\draw[->] (RT1) edge (RT2);
\draw[->] (RT2) edge (RT3);
\draw[->] (RT3) edge (Z1);

\draw[->] (Z2) edge (RB1);
\draw[->] (RB1) edge (RB2);
\draw[->] (RB2) edge (RB3);

\end{tikzpicture}
\end{equation}
}}

To complete the argument, apply the snake lemma to Diagram~\eqref{eq:diag3}. This gives an exact sequence $\ker \rho_{\ell-1,i} \to \ker \rho_{\ell,i} \to \ker \rho_{1,i}$. Since $\rho_{1,i}$ is injective by $F$-injectivity of $R/xR$, and $\rho_{\ell -1, i}$ is injective by induction, we have that $\ker \rho_{\ell,i} = 0$. Hence $\rho_{\ell,i}$ is injective. 
\end{proof}

\begin{remark}
The specific point where $x$ being a surjective element was used was to obtain that $F_*\beta$ in Diagram~\eqref{eq:diag2} is injective. The fact that $\alpha$ is surjective is not really required as one can do a straightforward chase on Diagram~\eqref{eq:diag3} similar to how one proves the snake lemma to conclude the result. 
\end{remark}

We record a lemma used in the proof of the main theorem whose proof is left to the reader.

\begin{lemma}
\label{lem:Finjlim}
For a directed system $\{ N_i, \tau_{i,j} \}_{i \in \Lambda}$ of $R$-modules, the system $\{ F_* N_i, F_* \tau_{i,j} \}_{i \in \Lambda}$ is also directed and $F_* \varinjlim N_i \cong \varinjlim F_* N_i$. 
\end{lemma}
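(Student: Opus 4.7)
The directedness of the system $\{F_*N_i, F_*\tau_{i,j}\}_{i\in\Lambda}$ is immediate from functoriality of $F_*$: it preserves identities and composition, so the cocycle conditions on the maps $\tau_{i,j}$ transfer verbatim to the maps $F_*\tau_{i,j}$.

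For the isomorphism, the cleanest conceptual route is to observe that $F_*$ is the restriction-of-scalars functor along $F\colon R\to R$. Restriction of scalars along any ring homomorphism admits a left adjoint (extension of scalars), and hence preserves all small colimits; in particular it preserves directed colimits, yielding the required natural isomorphism. If one prefers not to invoke adjoints, the plan is to work directly on the underlying abelian groups: by definition $F_*N = N$ as abelian groups, and $F_*\tau_{i,j}$ equals $\tau_{i,j}$ as a map of abelian groups. Thus the two directed systems in $\RMod$ have identical underlying directed systems of abelian groups, so the identity on $\varinjlim N_i$ furnishes a candidate bijection $\varphi\colon F_*\varinjlim N_i \to \varinjlim F_*N_i$ sending $F_*[n]$ to $[F_*n]$.

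It then remains to verify that $\varphi$ is $R$-linear. For $r\in R$ and a representative $n\in N_i$, in $F_*\varinjlim N_i$ one has $r\cdot F_*[n] = F_*(r^p\cdot [n]) = F_*[r^p n]$, while in $\varinjlim F_*N_i$ one has $r\cdot [F_*n] = [r\cdot F_*n] = [F_*(r^p n)]$. These agree under $\varphi$, so $\varphi$ is an isomorphism of $R$-modules.

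There is no real obstacle here; the step that requires the mildest care is simply to keep track of the two $R$-module structures under the identification of underlying abelian groups, which is exactly the computation above.
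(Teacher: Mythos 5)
Your direct argument (the second route) is correct and is, in fact, tidier than the paper's own proof: the paper constructs a comparison map via the universal property of the colimit and then argues surjectivity and injectivity by a diagram chase, incorrectly asserting along the way that the canonical maps $\tau_i \colon N_i \to \varinjlim N_i$ are injective (they need not be; the fix is to pass to some $\tau_{i,j}$ with $\tau_{i,j}(n)=0$). Your observation that $F_*$ is the identity on underlying abelian groups, so the two directed systems are literally the same system of abelian groups, sidesteps all of that; the only thing to check is $R$-linearity of the identification, which you do correctly.

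However, your first route misstates the adjunction. Restriction of scalars along $F$ admits extension of scalars as a \emph{left} adjoint, which makes restriction the \emph{right} adjoint and therefore guarantees that it preserves \emph{limits}, not colimits. The reason restriction of scalars preserves colimits is that it also admits a \emph{right} adjoint, namely coinduction $\Hom_R(S,-)$, so it is in fact both a left and a right adjoint. (Equivalently, and more in the spirit of your second argument: it is the identity on underlying abelian groups, hence commutes with everything computed there.) The conclusion is correct, but the stated justification is backwards; since you supply the direct argument as well, the lemma is still proved.
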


The next lemma explains the basic isomorphisms needed in the proof of the main theorem. 

\begin{lemma}
\label{lem:isos}
Let $(R,\fm)$ be a local ring with $x \in \fm$ a regular element. For each $i > 0$, we have isomorphisms
$$
H_\fm^i(H_{(x)}^1(R)) \cong H_{\fm}^{i+1}(R) \cong \varinjlim_{\ell} H_{\fm}^i(R/x^{\ell} R) \cong \varinjlim_{\ell} H_\fm^i(R/x^{p\ell}R).
$$
\end{lemma}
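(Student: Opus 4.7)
The plan is to assemble three isomorphisms, each of which is essentially a standard fact, and chain them together.

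First, the isomorphism $H_\fm^i(H_{(x)}^1(R)) \cong H_\fm^{i+1}(R)$ is exactly the content of Lemma~\ref{lem:SS}, so no new work is needed there. For the middle isomorphism, I would begin by rewriting $H_{(x)}^1(R)$ as a direct limit. From Example~\ref{xmp:Cech}, $H_{(x)}^1(R) = R_x/R$, and the standard identification $R_x/R \cong \varinjlim_\ell R/x^\ell R$ holds, where the transition map $R/x^\ell R \to R/x^{\ell+1}R$ is multiplication by $x$ (the class of $r \bmod x^\ell$ maps to the class of $xr \bmod x^{\ell+1}$, matching the inclusion $x^{-\ell}R/R \hookrightarrow x^{-(\ell+1)}R/R$ in $R_x/R$). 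Since local cohomology commutes with direct limits (filtered colimits are exact and $H_\fm^i = \varinjlim_t \Ext_R^i(R/\fm^t, -)$ commutes with colimits in the module argument via the \v{C}ech complex description), one obtains
$$
H_\fm^i(H_{(x)}^1(R)) \;\cong\; H_\fm^i\!\left( \varinjlim_\ell R/x^\ell R \right) \;\cong\; \varinjlim_\ell H_\fm^i(R/x^\ell R).
$$

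For the final isomorphism, the subset $\{p\ell : \ell \geq 1\}$ is cofinal in $\{\ell : \ell \geq 1\}$ as a directed set (given any $\ell$, choose $\ell' \geq \ell/p$), so passing to a cofinal subsystem does not change the colimit:
$$
\varinjlim_\ell H_\fm^i(R/x^{p\ell}R) \;\cong\; \varinjlim_\ell H_\fm^i(R/x^\ell R).
$$

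I do not expect a real obstacle here; the only thing to be careful about is matching the transition maps in $\varinjlim_\ell R/x^\ell R$ (multiplication by $x$) with the identification $R_x/R = \varinjlim x^{-\ell}R/R$, and verifying that the local cohomology functor $H_\fm^i(-)$ commutes with filtered direct limits of modules, both of which are routine.
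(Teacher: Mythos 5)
Your proposal is correct and follows essentially the same route as the paper's proof: identify $H^1_{(x)}(R)$ with $\varinjlim_\ell R/x^\ell R$ (transition maps multiplication by $x$), commute $H^i_\fm$ past the filtered colimit, invoke Lemma~\ref{lem:SS} for $H^i_\fm(H^1_{(x)}(R)) \cong H^{i+1}_\fm(R)$, and use cofinality of $\{p\ell\}$ in $\{\ell\}$ for the last isomorphism. No substantive difference.
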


\begin{proof}
We show this by showing that $R$-modules $H_\fm^{i+1}(R)$, $\varinjlim_{\ell} H_{\fm}^i(R/x^{\ell} R)$, and $\varinjlim_{\ell} H_\fm^i(R/x^{p\ell}R)$ are all isomorphic to the iterated local cohomology module $H_\fm^i(H_{(x)}^1(R))$. Computing $H^1_{(x)}(R)$ as  
$$ 
\varinjlim \{R/xR \xrightarrow{x} R/x^2R \xrightarrow{x} R/x^3R \xrightarrow{x} \cdots \},
$$
and noting that local cohomology commutes with direct limits, one has 
$$
\varinjlim_{\ell} H^i_{\fm}(R/x^{\ell}R) \cong H^i_{\fm}(\varinjlim_{\ell} R/x^{\ell}R) \cong H^i_{\fm}(H^1_{(x)}(R)).
$$
By Lemma \ref{lem:SS}, 
$$
\varinjlim_{\ell} H^i_{\fm}(R/x^{\ell}R) \cong H^i_{\fm}(H^1_{(x)}(R)) \cong H^{i+1}_{\fm}(R).
$$ 
Since $\{x^{p\ell}\}_{\ell \in \BN}$ is cofinal in $\{x^\ell\}_{\ell \in \BN}$, one can compute $H^1_{(x)}(R)$ as the limit 
$$ \varinjlim \{R/x^pR \xrightarrow{x^p} R/x^{2p}R \xrightarrow{x^p} R/x^{3p}R \xrightarrow{x^p} \cdots \},$$
and like before we have
$\varinjlim_{\ell} H^i_{\fm}(R/x^{p\ell}R) \cong H^i_{\fm}(H^1_{(x)}(R)).$
\end{proof}

We now prove the main theorem of this article.

\begin{theorem}
\label{thm:MainTheorem}
Let $(R,\fm,k)$ be a local ring of prime characteristic $p>0$ and $x \in \fm$ is a regular surjective element. If $R/xR$ is $F$-injective, then $R$ is also $F$-injective. 
\end{theorem}

\begin{proof}

Since $R$ has a regular element $x$, $H^0_{\fm}(R)=0$ and there is nothing to prove in the case $i=0$. Fix $ i > 0$ and consider the following commutative diagram of $R$-modules, where $\rho_F$ denotes the natural Frobenius map

\begin{equation}\label{eq:MainDiagLim}
\begin{tikzpicture}[baseline=(current bounding box.center)]
\pgfmathsetmacro{\L}{2}
\node (NW) at (-\L,\L) {$R/xR$};
\node (NE) at (\L,\L){$R/x^2 R$};
\node (NEE) at (3*\L,\L) {$\cdots$};
\node (SW) at (-\L,0){$F_* (R/x^p R)$};
\node (SE) at (\L,0){$F_* (R/x^{2p}R)$};
\node (SEE) at (3*\L,0) {$\cdots$};

\draw[->] (NW) edge node[above]{$\cdot x$} (NE);
\draw[->] (NE) edge node[above]{$\cdot x$} (NEE);

\draw[->] (SW) edge node[above]{$\cdot x^p$}(SE);
\draw[->] (SE) edge node[above]{$\cdot x^p$}(SEE);

\draw[->] (NW) edge  node[right]{$\rho_F$} (SW);
\draw[->] (NE) edge node[right]{$\rho_F$} (SE);

\end{tikzpicture}
\end{equation}

Taking direct limits on the rows of Diagram~\eqref{eq:MainDiagLim} and applying $H^i_{\fm}(-)$, we get two directed systems $\{ H_\fm^i(R/x^\ell R)\}_{\ell > 0}$ and $\{ H_\fm^i(R/x^{p\ell} R)\}_{\ell > 0}$ with Frobenius linear maps 
$$
\rho_{\ell,i} \colon H_\fm^i(R/x^\ell R) \to F_* H_\fm^i(R/x^{p\ell}R)
$$ 
which are injective for each $\ell > 0$ by Theorem \ref{theorem1}. Thus the collection of injective Frobenius linear maps $\rho_{\ell,i} \colon H^i_{\fm}(R/x^{\ell}R) \to F_*H^i_{\fm}(R/x^{p\ell}R)$ induce an injective Frobenius linear map 
$$
\rho_1=\varinjlim_{\ell} \rho_{\ell,i}:\varinjlim_{\ell} H^i_{\fm}(R/x^{\ell}R) \to F_* \varinjlim_{\ell} H^i_{\fm}(R/x^{p\ell}R),
$$ 
since $F_*$ commutes with $\varinjlim$ by Lemma \ref{lem:Finjlim}. The module $H_{(x)}^1(R)$ has a natural Frobenius action induced from the Frobenius on $R$ which in turn induces a Frobenius action $\rho_2$ on $H^i_{\fm}(H^1_{(x)}(R))$. Let $\rho_3$ denote the natural Frobenius action on $H^{i+1}_{\fm}(R)$.

It suffices to show that the following diagram commutes for each $i \geq 0$.
\begin{equation}
\label{diag2}
\begin{tikzpicture}[baseline=(current bounding box.center)]
\pgfmathsetmacro{\L}{2}
\node (NW) at (-\L,\L) {$\varinjlim_\ell H_\fm^i(R/x^\ell R)$};
\node (NE) at (\L,\L){$H_\fm^i(H_{(x)}^1(R))$};
\node (NEE) at (3*\L,\L) {$H^{i+1}_{\fm}(R)$};
\node (SW) at (-\L,0){$\varinjlim_\ell F_*H^i_{\fm}(R/x^{p\ell} R)$};
\node (SE) at (\L,0){$F_* H^i_{\fm}(H^1_{(x)}(R))$};
\node (SEE) at (3*\L,0) {$F_* H^{i+1}_{\fm}(R)$};

\draw[->] (NW) edge node[above]{$\alpha_1$} (NE);
\draw[->] (NE) edge node[above]{$\beta_1$} (NEE);

\draw[->] (SW) edge node[above]{$F_* \alpha_2$}(SE);
\draw[->] (SE) edge node[above]{$F_* \beta_2$}(SEE);

\draw[->] (NW) edge  node[right]{$\rho_1$} (SW);
\draw[->] (NE) edge node[right]{$\rho_2$} (SE);
\draw[->] (NEE) edge node[right]{$\rho_3$} (SEE);

\end{tikzpicture}
\end{equation}
where $\alpha_1$ and $F_*\alpha_2$ are the isomorphisms coming from Lemma \ref{lem:isos}, and $\beta_1$ and $F_*\beta_2$ are the isomorphisms coming from Lemma \ref{lem:SS}. Since $\rho_1$ is injective, it follows from this commutativity that $\rho_3$ is injective. We show Diagram~\eqref{diag2} commutes by splitting it into two commuting squares.

To show the first square in Diagram~\eqref{diag2} commutes, note that this square is just applying $H^i_\fm(-)$ to the following square, where the vertical Frobenius linear maps are those induced by the natural Frobenius on $R$. 

\begin{center}
\begin{tikzpicture}
\pgfmathsetmacro{\L}{1.5}
\node (NW) at (-\L,\L) {$\varinjlim_\ell R/x^\ell R$};
\node (NE) at (\L,\L){$H_{(x)}^1(R)$};
\node (SW) at (-\L,0){$\varinjlim_\ell F_* (R/x^{p\ell} R)$};
\node (SE) at (\L,0){$F_*H_{(x)}^1(R)$};

\draw[->] (NW) edge node[above]{$\cong$} (NE);
\draw[->] (NW) edge  (SW);
\draw[->] (NE) edge (SE);
\draw[->] (SW) edge node[above]{$\cong$}(SE);

\end{tikzpicture}
\end{center}

The second square in Diagram~\eqref{diag2} commutes since $\RGamma_\fm \circ \RGamma_{(x)} \cong \RGamma_\fm$ in the derived category by \cite[Cor. 10.8.3]{Wei94} and we are simply applying each functor to the natural Frobenius action $\rho_F \colon R \to F_* R$. That is to say, $\RGamma_\fm ( \RGamma_{(x)} ( \rho_F)) = \RGamma_\fm(\rho_F)$. 
\end{proof}

\subsection{Deforming surjectivity of Frobenius linear maps}

Clearly Frobenius linear maps are not generally surjective. However, often it is surjective ``up to Frobenius". To make this clear, we start with a simple example.

\begin{example}
Let $k$ be a perfect field of characteristic $p > 0$. The natural Frobenius action $k[x] \to F_*k[x]$ is $k[x]$-linear and has image $F_* k[x^p]$ in $F_* k[x]$. So it is not surjective. However, it is surjective up to $F_* k[x]$-span in the sense that the singleton set $\{F_*1\}$ forms a $F_*k[x]$-basis of $F_*k[x]$.
\end{example}

\begin{definition} \label{dff:equptoF_*Rspan} Let $R$ be a ring of characteristic $p$ and $M$ and $N$ be $R$-modules. Call an $e$-th iterated Frobenius linear map $\rho:M \to F^e_*N$ \textit{surjective up to $F^e_*R$-span}, when the $F^e_*R$-span of $\im(\rho)$ is equal to $F^e_*N$. 
\end{definition}

The condition in Definition~\ref{dff:equptoF_*Rspan} is equivalent to having a set $\{a_i\}_{i \in \Lambda}$ of  generators for $M$ for which $F_*^eN$ is the $F^e_*R$-submodule of $F^e_*N$ spanned by $\{\rho(a_i)\}_{i \in \Lambda}$. This section investigates how this property deforms.

We leave it to the reader to check for a directed system of $R$-modules $\{M_i\}_{i \in I}$  and Frobenius actions $\phi_i \colon M_i \to F_*M_i$ for each $i \in I$ with each $\phi_i$ surjective up to $F_*R$-span, the natural induced map $\phi=\varinjlim_i \phi_i \colon \varinjlim_i M_i \to \varinjlim_i F_* M_i$ is also surjective up to $F_*R$-span.

\begin{lemma}
\label{final}
Let $R$ be a commutative ring of characteristic $p>0$ and assume that
$$
\begin{CD}
L @>\alpha_1>> M @>\alpha_2>> N \\
@V\rho_1VV @V\rho_2VV @V\rho_3VV \\
F_*L' @>F_*\alpha'_1>> F_*M' @>F_*\alpha'_2>> F_*N'  \\
\end{CD}
$$
is a commutative diagram where $L,M,N,L',M',$ and $N'$ are $R$-modules such that the top row is $R$-linear and exact and the bottom row is $F_*R$-linear and exact, and such that each $\rho_i$ is a Frobenius linear map for $i=1,2,3$. If $\rho_1$ and $\rho_3$ are surjective up to $F_*R$-span and $\alpha_2$ is surjective, then  $\rho_2$ is also surjective up to $F_*R$-span..
\end{lemma}

\begin{proof}
Choose sets of generators of $R$-modules $L$, $M$, and $N$, say $\{x_i\}$, $\{y_j\}$, and $\{z_k\}$ respectively. Without loss of generality, we may assume $\{\alpha_1(x_i)\} \subseteq \{y_j\}$ and $\alpha_2(\{y_j\} \backslash \{\alpha_1(x_i)\})=\{z_k\}$. It suffices to show each element of $F_*M'$ can be presented as an $F_*R$-linear combination of $\{\rho_2(y_j)\}$. Pick $F_* m \in F_*M'$ and consider $F_*\alpha'_2( F_*m ) \in F_*N'$. By hypothesis, we can write
\begin{equation}
\label{eq:surjRSpan1}F_*\alpha'_2(F_*m )=\sum_i F_*c_i \rho_3(z_i) 
\end{equation}
with $F_*c_i \in F_*R$. Now let $y'_i \in M$ be the inverse image of each $z_i \in N$ appearing in the equation $(\ref{eq:surjRSpan1})$. By our set up, we have $y'_i \in \{y_j\}$. By commutativity of the diagram, we also have $$F_* m -\sum_i F_*c_i\rho_2(y'_i) \in \ker F_*\alpha'_2.$$ Since the bottom row is exact, one has $$F_* m -\sum_i F_*c_i\rho_2(y'_i) = \sum_j F_*a_j F_*\alpha'_1(\rho_1(x_j))$$ for some $F_*a_j \in F_*R$ and thus
$$
F_*m =\sum_j F_*a_j F_*\alpha'_1(\rho_1(x_j))+\sum_i F_*c_i\rho_2(y'_i) = \sum_j F_*a_j \big(F_*\alpha'_1(\rho_1(x_j))\big)+\sum_i F_*c_i\rho_2(y'_i),
$$
which proves the lemma, since each $F_*\alpha'_1(\rho_1(x_j)) \in \{\rho_2(y_i)\}$.
\end{proof}

As a corollary, we obtain the following.

\begin{theorem}
\label{thm:surjRspan}
Let $(R,\fm)$ be a local ring of characteristic $p>0$ with $x \in \fm$ a surjective element. If the Frobenius linear map $H_\fm^i(R/xR) \to F_* H_\fm^i(R/x^pR)$ is surjective up to $F_*R$-span for all $i \ge 0$, then the Frobenius action 
$$
H^i_{\fm}(R) \to F_*H^i_{\fm}(R)
$$ 
is also surjective up to $F_*R$-span for all $i \ge 0$.
\end{theorem}

\begin{proof}
We utilize the notation and setup from the proof of Theorem~\ref{theorem1}. We start by showing that $\rho_1 := \varinjlim_\ell \rho_{\ell,i}$ is surjective up to $F_*R$-span and to do so, it suffices to check that each $\rho_{\ell, i}$ is surjective up to $F_*R$-span. Proceed by induction on $\ell > 0$ (defined in the proof of Theorem~\ref{theorem1}), where the base case, i.e., that  $\rho_{1,i}$ is surjective up to $F_*R$-span, is guaranteed by hypothesis. We assume $\rho_{\ell-1,i}$ is surjective up to $F_*R$-span. Note that Diagram~\eqref{eq:diag3} of Theorem \ref{theorem1} has exact rows and by Lemma \ref{final},  $\rho_{\ell,i}$ is surjective up to $F_*R$-span for all $\ell > 0$. 

Now proceed as in the proof of Theorem~\ref{thm:MainTheorem}. Here $\rho_1=\varinjlim_\ell \rho_{\ell,i}$ is surjective up to $F_*R$-span and  $\beta_1 \circ \alpha_1$ and $F_*\beta_2 \circ F_*\alpha_2$ are isomorphisms. From Diagram~\eqref{diag2}, we see that $\rho_3$ is surjective up to $F_*R$-span as well. Putting this together we have shown the Frobenius action 
$$
H^{i+1}_{\fm}(R) \to F_*H^{i+1}_{\fm}(R)
$$
is surjective up to $F_*R$-span for $i \ge 0$, as desired.
\end{proof}

\section{Applications}

Utilizing Theorem \ref{thm:MainTheorem}, we now describe two conditions for when $F$-injectivity deforms. One is a finite length condition on local cohomology modules, the other is $F$-purity. Both can be stated in terms of Frobenius actions on local cohomology using the notion of anti-nilpotent modules.

\subsection{Finite Length Cohomology}

The first case that we can apply our main theorem to is one utilizing a finiteness condition on local cohomology modules.

\begin{definition}
For a local ring $(R,\fm)$, we say an $R$-module $M$ has \textit{finite local cohomology} (FLC) provided the local cohomology module $H^i_{\fm}(M)$ has finite length for all $i \leq \dim M-1$. 
\end{definition}

\begin{remark} 
Sometimes when a local ring $R$ has FLC it is called a \textit{generalized Cohen-Macaulay ring}. When $R$ has a dualizing complex, this means exactly that the non-CM locus of $R$ is isolated \cite{Sch75}. 
\end{remark}

In the setting of a local ring $(R,\fm)$ with $x \in \fm$ a regular element, we are most concerned with the $R$-modules $R$ and $R/x^\ell R$; i.e., an infinitesimal neighborhood of the special fiber. We now show that FLC extends to such neighborhoods when imposed on the special fiber.

\begin{lemma}
\label{lemma3}
Let $(R,\fm,k)$ be a local ring with $x \in R$ a regular element such that $\fm^s \cdot H_\fm^i(R/xR) = 0$ for some $s \ge 0$. For each $\ell > 0$, we have 
$$
\fm^{s\ell} \cdot H_\fm^i(R/x^\ell R) = 0.
$$
In particular, if $R/xR$ has FLC, so does $R/x^\ell R$. 
\end{lemma}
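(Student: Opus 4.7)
The plan is to induct on $\ell$, with the case $\ell=1$ being exactly the hypothesis. For the inductive step, I will exploit the short exact sequence
\[ 0 \to R/x^{\ell-1}R \xrightarrow{\cdot x} R/x^\ell R \to R/xR \to 0, \]
which is exact precisely because $x$ is a non-zero-divisor. The induced long exact sequence in local cohomology contains the segment
\[ H_\fm^i(R/x^{\ell-1}R) \xrightarrow{\phi} H_\fm^i(R/x^\ell R) \xrightarrow{\psi} H_\fm^i(R/xR). \]

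For $\eta \in H_\fm^i(R/x^\ell R)$, the hypothesis on $R/xR$ gives $\fm^s \psi(\eta) = 0$, so $\fm^s \eta \subseteq \ker \psi = \im \phi$. Since $\phi$ is $R$-linear, any annihilator of its source also annihilates $\im \phi$; by the inductive hypothesis this annihilator contains $\fm^{s(\ell-1)}$. Multiplying yields
\[ \fm^{s\ell}\eta = \fm^{s(\ell-1)}\bigl(\fm^s\eta\bigr) = 0, \]
which closes the induction.

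For the "in particular" clause, I would first observe that $\dim R/x^\ell R = \dim R/xR$, since $(x^\ell)$ and $(x)$ have the same radical, so the range of indices $i$ in the FLC condition is the same for both rings. If $R/xR$ is FLC, then for each $i < \dim R/xR$ the module $H_\fm^i(R/xR)$ has finite length, and in particular is annihilated by some power of $\fm$; taking the maximum over the finitely many relevant $i$ produces a uniform $s$ with $\fm^s H_\fm^i(R/xR)=0$. The first part then gives $\fm^{s\ell} H_\fm^i(R/x^\ell R) = 0$ for each such $i$. Since local cohomology of a finitely generated module over a Noetherian local ring is Artinian, annihilation by a power of $\fm$ upgrades to finite length, so $R/x^\ell R$ is FLC.

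The argument is essentially routine; the only place one could slip up is the bookkeeping with annihilators across the exact sequence, so I would emphasize explicitly that $R$-linearity of $\phi$ is what transfers the inductive annihilator from the source to $\im\phi$. No serious obstacle is anticipated.
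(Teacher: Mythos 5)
Your proof is correct and follows essentially the same approach as the paper: induction on $\ell$ using the short exact sequence $0 \to R/x^{\ell-1}R \xrightarrow{\cdot x} R/x^\ell R \to R/xR \to 0$ and the resulting segment of the long exact sequence in local cohomology. You phrase the inductive step in terms of annihilator ideals transported along the $R$-linear map $\phi$, while the paper does the equivalent element chase, and you also spell out the ``in particular'' clause (dimension match and Artinian $+$ $\fm$-power-annihilated $\Rightarrow$ finite length) which the paper leaves implicit; both are accurate.
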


\begin{proof}
We show this by induction on $\ell$. If $\ell = 1$, then this is just the hypothesis. Assume $\ell > 1$ and $\fm^{sj} \cdot H_\fm^i(R/x^j R) = 0$ for all $j < \ell$. The short exact sequence 
$$
0 \to R/x^{\ell -1}R \xrightarrow{x} R/x^{\ell} R \to R/xR \to 0,
$$ 
induces a long exact sequence in local cohomology. We only need the portion
$$
H_\fm^i(R/x^{\ell -1}R) \xrightarrow{\alpha} H_\fm^i(R/x^\ell R) \xrightarrow{\beta} H_\fm^i(R/xR),
$$ 
which is an exact sequence of $R$-modules. Take an element $\eta \in H_\fm^i(R/x^\ell R)$ and $c \in \fm^s$. One has $\beta(c \eta)=c \beta(\eta) = 0$, which implies that $c \eta$ has a preimage $\theta \in H_\fm^i(R/x^{\ell -1} R)$ along $\alpha$. By induction, we have $m \cdot \theta = 0$ for any $m \in \fm^{s(\ell -1)}$. Therefore, $\alpha(m \cdot  \theta ) = 0$ and $m \cdot c \eta = (mc) \cdot \eta = 0$. Since $c$ and $m$ were chosen arbitrarily, we have that $\fm^{s\ell} \cdot H_\fm^i(R/x^\ell R) = 0$. 
\end{proof}

\begin{remark} 
We note that there was no restriction on the characteristic on rings in Lemma \ref{lemma3}. 
\end{remark}

An easy consequence of the FLC property is a result on surjective maps of local cohomology.

\begin{lemma}
\label{lem:surj}
Let $(R,\fm,k)$ be a local ring of characteristic $p > 0$ with perfect residue field $k$ and $x \in \fm$ a regular element. Assume that $R/xR$ is $F$-injective and FLC. For each $\ell > 0$, the surjection $R/x^\ell R \to R/xR$ induces a surjection 
$$
H_\fm^i(R/x^\ell R) \to H_\fm^i(R/xR)
$$ 
for each $0 \leq i \leq \dim R - 2$. 
\end{lemma}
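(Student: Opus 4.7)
The plan is to split into cases based on the cohomological degree $i$, with the bulk of the work handled by iterating Frobenius and invoking Lemma~\ref{lemma2}. Note that we cannot invoke Lemma~\ref{cor:inj} here, since that lemma requires $x$ to be a surjective element --- which is essentially what we are trying to establish.

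First, dispose of the large-degree range $i \geq \dim R - 1 = \dim R/xR$. If $i > \dim R - 1$, then $H_\fm^i(R/xR)$ vanishes by Grothendieck vanishing and there is nothing to show. If $i = \dim R - 1$, then the long exact sequence induced by
\begin{equation*}
0 \to R/x^{\ell - 1}R \xrightarrow{\cdot x} R/x^\ell R \to R/xR \to 0
\end{equation*}
yields the surjectivity immediately, since the next term $H_\fm^{\dim R}(R/x^{\ell - 1}R)$ vanishes ($x^{\ell - 1}$ is a non-zero divisor so $\dim R/x^{\ell - 1}R = \dim R - 1$).

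For the intermediate range $0 < i < \dim R - 1$, the FLC hypothesis on $R/xR$ gives that $H_\fm^i(R/xR)$ has finite length. Combined with $F$-injectivity of $R/xR$ and the perfect residue field, Lemma~\ref{lemma2} promotes the Frobenius action $\rho \colon H_\fm^i(R/xR) \to F_* H_\fm^i(R/xR)$ from an injection to a bijection. Since $F_*$ is exact, each $F_*^j(\rho)$ is bijective, so the iterated Frobenius action $\rho^e \colon H_\fm^i(R/xR) \to F_*^e H_\fm^i(R/xR)$ is bijective for every $e \geq 1$. Given $\ell > 0$, choose $e$ with $p^e \geq \ell$. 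Then the $e$-th iterated Frobenius on $R/xR$ factors as
\begin{equation*}
R/xR \xrightarrow{\rho_F^e} F_*^e(R/x^{p^e}R) \to F_*^e(R/x^\ell R) \to F_*^e(R/xR),
\end{equation*}
where $\rho_F^e$ sends $r \mapsto F_*^e(r^{p^e})$ --- well-defined by the freshman's dream $(r + xs)^{p^e} = r^{p^e} + x^{p^e}s^{p^e}$ in characteristic $p$ --- and the remaining two maps are $F_*^e$ of the natural quotient surjections (which exist as $p^e \geq \ell \geq 1$). Applying $H_\fm^i$ identifies the composition with $\rho^e$; since $\rho^e$ is surjective, the terminal map $F_*^e H_\fm^i(R/x^\ell R) \to F_*^e H_\fm^i(R/xR)$ is surjective, which is equivalent to the sought-after surjection $H_\fm^i(R/x^\ell R) \to H_\fm^i(R/xR)$.

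The main obstacle, and the key insight to get right, is the observation that finite-length modules with an injective Frobenius action actually admit a \emph{bijective} Frobenius action over a perfect residue field. Everything else is bookkeeping: verifying the factorization of $\rho_F^e$ on the nose and checking compatibility after taking local cohomology. Circularity with Lemma~\ref{cor:inj} is avoided precisely because we replace the multiplication-by-$x$ injectivity input with the bijectivity of iterated Frobenius.
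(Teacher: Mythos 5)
Your proof is correct and its core coincides with the paper's: use Lemma~\ref{lemma2} to upgrade the injective Frobenius action on the finite-length module $H_\fm^i(R/xR)$ to a bijection, then factor the iterated Frobenius $\rho^e$ through the natural surjections $R/x^{p^e}R \twoheadrightarrow R/x^\ell R \twoheadrightarrow R/xR$ and apply $H_\fm^i$, concluding surjectivity of the last map since $F_*^e$ is (faithfully) exact. The one place you go beyond the paper's written argument is worth pointing out: FLC for $R/xR$ only guarantees finite length of $H_\fm^i(R/xR)$ for $i \leq \dim R/xR - 1 = \dim R - 2$, so Lemma~\ref{lemma2} is unavailable at $i = \dim R - 1$, and the paper's proof does not explicitly address this top degree. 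Your separate treatment via the long exact sequence from $0 \to R/x^{\ell-1}R \xrightarrow{\cdot x} R/x^\ell R \to R/xR \to 0$, using that $H_\fm^{\dim R}(R/x^{\ell-1}R) = 0$ by Grothendieck vanishing, correctly closes this. The $i > \dim R - 1$ range is vacuous as you say. So: same route, with a small but genuine gap in the paper filled.
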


\begin{proof}
By Lemma \ref{lemma2}, since $R/xR$ has FLC and is F-injective with perfect residue field, for $i$ in the interval $[0,\dim R - 2]$ the $e$-th iterated Frobenius action 
$$
H_\fm^i(R/xR) \to F^e_* H_\fm^i(R/xR)
$$ 
induced by Frobenius on $R/xR$ is surjective. For $\ell>0$, choose $e \gg 0$ so that the surjection $R/x^{p^e}R \twoheadrightarrow R/xR$ factors as $R/x^{p^e}R \twoheadrightarrow R/x^{\ell}R \twoheadrightarrow R/xR$. This induces a composition of maps:
$$
H_\fm^i(R/xR) \to F^e_*H_\fm^i(R/x^{p^e} R) \to F^e_*H_\fm^i(R/ x^\ell R) \to F^e_*H_\fm^i(R/xR).
$$ 
The composition is surjective and so $H_\fm^i(R/x^\ell R) \to H_\fm^i(R/xR)$ must be. 
\end{proof}

\begin{remark}
The assumption that the residue field of $R$ is perfect is necessary in the proof of Lemma~\ref{lem:surj}. If $R$ is F-injective and contains a non-perfect field $K$, it is not necessarily true that $R \otimes_K K^{1/p}$ is $F$-injective. For example, set $K:=\mathbf{F}_p(x)$. Note that $R:=K[t]/(t^p-x)$ is a field, however, $R \otimes_K K^{1/p} \cong K^{1/p}[t]/(t-x^{1/p})^p$ is not reduced and hence not $F$-injective. 
\end{remark}

\begin{corollary} 
\label{cor:FLCFINJ}
Let $(R,\fm,k)$ be a local ring of characteristic $p$ with perfect residue field and $x \in \fm$ a regular element. If $R/xR$ has FLC and is $F$-injective, then $R$ is $F$-injective. 
\end{corollary}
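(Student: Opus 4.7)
The strategy is to verify the surjectivity hypothesis of Theorem~\ref{thm:MainTheorem} under the stronger assumption that $R/xR$ is FLC. Since the main theorem already packages all the homological machinery needed to deform $F$-injectivity once one has control over the surjections $H_\fm^i(R/x^\ell R) \twoheadrightarrow H_\fm^i(R/xR)$, the entire proof of the corollary amounts to confirming that these surjections exist for every $\ell > 0$ and $i \geq 0$.

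First I would pass to the strict henselization $R^{\sh}$ to reduce to the case of perfect residue field, exactly as in the proof of Theorem~\ref{thm:MainTheorem}: the properties of $x$ being a non-zero-divisor, $R/xR$ being $F$-injective, and $R/xR$ being FLC all transfer under the faithfully flat extension $R \to R^{\sh}$ (which is an inductive limit of standard \'etale extensions), and $F$-injectivity descends from $R^{\sh}$ back to $R$. So we may assume $k$ is perfect.

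Next, I would invoke Lemma~\ref{lem:surj}, which provides precisely the required surjectivity for each $i > 0$ and each $\ell > 0$. For the case $i = 0$, the modules $H_\fm^0(R/x^\ell R)$ and $H_\fm^0(R/xR)$ are finite length (being finitely generated and annihilated by a power of $\fm$), and the argument of Lemma~\ref{lem:surj} applies verbatim: Lemma~\ref{lemma2} makes the Frobenius on $H_\fm^0(R/xR)$ bijective, and factoring a sufficiently high iterate of the Frobenius through the quotient $R/x^{p^e}R \twoheadrightarrow R/x^\ell R \twoheadrightarrow R/xR$ forces $H_\fm^0(R/x^\ell R) \twoheadrightarrow H_\fm^0(R/xR)$ to be surjective. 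For indices $i$ beyond $\dim R - 1$, both local cohomology modules vanish.

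With the hypothesis of Theorem~\ref{thm:MainTheorem} in hand in all degrees, the conclusion that $R$ is $F$-injective follows at once. There is no substantive obstacle here — all of the genuine content lives in Theorem~\ref{thm:MainTheorem} and Lemma~\ref{lem:surj}; the corollary simply assembles these pieces after a standard reduction to perfect residue field.
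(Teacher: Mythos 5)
Your proof is correct and follows essentially the same route as the paper's: reduce to perfect residue field, invoke Lemma~\ref{lem:surj} to get the needed surjections, and apply Theorem~\ref{thm:MainTheorem}. You are slightly more careful than the paper in explicitly treating the $i=0$ case, which Lemma~\ref{lem:surj} as stated omits but Theorem~\ref{thm:MainTheorem}'s hypothesis nominally requires (and which is genuinely used via Lemma~\ref{cor:inj} at $i=1$); your handling of it is correct.
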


\begin{proof}
We utilize the same notation as Theorem~\ref{theorem1} and Theorem~\ref{thm:MainTheorem}. Applying Lemma~\ref{lem:surj}, we see that $H_\fm^i(R/x^\ell R) \to H_\fm^i(R/xR)$ is surjective for all $i \in [0,\dim R -2]$ and $\ell > 0$. Now, following the proof of Lemma~\ref{cor:inj},  $H_\fm^i(R/x^\ell R) \to H_\fm^i(R/x^{\ell + 1} R)$ is injective for all $\ell > 0$ and $i \in [0,\dim R -1]$. This suffices in the proof of Theorem~\ref{theorem1} to conclude that $\rho_{i,\ell} \colon H_\fm^i(R/x^\ell R) \to F_* H_\fm^i(R/x^{p \ell}R)$ is injective for $i \in [0,\dim R -1]$ and all $ \ell > 0$. Finally, this is sufficient to apply the proof of Theorem~\ref{thm:MainTheorem} to conclude that $R$ is $F$-injective. 
\end{proof}

Immediately this shows that potential counterexamples to the deformation of $F$-injectivity in nice geometric settings must have dimension at least $4$.

\begin{corollary}
\label{cor:FinjGeomlowdim}
If $(R,\fm,k)$ is a complete local ring of characteristic $p > 0$ with perfect residue field and dimension at most 4 and $x \in \fm$ is a regular element with $R/xR$ normal and $F$-injective, then $R$ is $F$-injective. 
\end{corollary}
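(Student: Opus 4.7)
The plan is to reduce this directly to Corollary~\ref{cor:FLCFINJ}, so the entire task is to show that under the stated geometric hypotheses the special fiber $R/xR$ has FLC. Once that is established, Corollary~\ref{cor:FLCFINJ} immediately gives the $F$-injectivity of $R$.

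First, I would verify that $R/xR$ is equidimensional of dimension at most $3$. Since $R$ is complete, it is catenary. Given any minimal prime $\fq$ of $R/xR$, lifting to $R$ produces a prime $\fp \subset R$ minimal over $(x)$; Krull's principal ideal theorem gives $\Ht \fp = 1$, and the catenary plus equidimensional hypothesis on $R$ forces $\dim R/\fp = \dim R - 1$. Hence every $(R/xR)/\fq$ has dimension $\dim R - 1 \leq 3$, so $R/xR$ is equidimensional.

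Next, I would invoke the remark following the definition of FLC: for a complete equidimensional local ring, FLC is equivalent to the non Cohen–Macaulay locus being isolated. So it suffices to show $(R/xR)_\fp$ is Cohen–Macaulay for every prime $\fp \neq \fm$ of $R/xR$. For such a $\fp$, the catenary and equidimensional conditions yield $\Ht \fp \leq \dim R/xR - 1 \leq 2$. Since $R/xR$ is normal, its localization $(R/xR)_\fp$ is normal of dimension at most $2$. Any normal local ring of dimension $\leq 2$ is Cohen–Macaulay: the cases of dimensions $0$ and $1$ are trivial, and in dimension $2$ normality implies Serre's $(S_2)$, which forces $\depth \geq 2$ and hence Cohen–Macaulayness.

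Thus the non-CM locus of $R/xR$ is contained in $\{\fm\}$ and $R/xR$ is FLC. Applying Corollary~\ref{cor:FLCFINJ} with the $F$-injectivity hypothesis on $R/xR$ concludes the proof. No serious obstacle arises; all ingredients are standard facts about catenary, equidimensional, and normal rings, and the real work has already been done in Corollary~\ref{cor:FLCFINJ}. The only mildly subtle point is ensuring that equidimensionality descends from $R$ to $R/xR$, which uses completeness of $R$ through catenary-ness.
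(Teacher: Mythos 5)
Your proof is correct and follows essentially the same route as the paper: reduce to Corollary~\ref{cor:FLCFINJ} by showing that normality of the special fiber $R/xR$ together with $\dim R/xR \leq 3$ forces the non-Cohen--Macaulay locus to be isolated, hence FLC. You supply somewhat more detail than the paper (the catenary argument, the explicit localization check); one small simplification you could use is that a local normal ring is automatically a domain, so equidimensionality of $R/xR$ is immediate rather than needing the minimal-prime argument.
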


\begin{proof}
Since $R/xR$ is a local normal domain and $x \in \fm$ is a regular element, $R$ is also normal by (\cite{EGA} 5.12.7). In particular, $R$ is a domain and equidimensional. Since $\dim R \leq 4$, one has $\dim R/xR \leq 3$. By normality of $R/xR$, it satisfies Serre's condition $S_2$, therefore the non-CM locus is isolated, hence $R/xR$ has FLC and by Corollary \ref{cor:FLCFINJ}, $R$ must be $F$-injective. 
\end{proof}

\begin{example}
For any ring $A$ which is not Cohen Macaulay, has FLC, and $F$-split, the ring $R := A[[x]]$ does not have FLC. However, $R/xR$ is $F$-injective and has FLC. In particular, consider 
$$
A=\mathbf{F}_p[[a,b,c,d]]/(a,b) \cap (c,d).
$$ 
Note that $A$ has FLC and is even Buchsbaum; see \cite{GoOg}. It is also not Cohen-Macaulay, but is $F$-split by Fedder's criterion \cite{Fed}. Thus $A[[x]]$ is $F$-injective and the non-CM locus of $R$ is defined by the non-maximal ideal $\fn R$ where $\fn$ is the maximal ideal $\fn$ of $A$.
\end{example}

\subsection{$F$-splitting and $F$-injectivity}

The second application concerns $F$-purity. We utilize work of L. Ma \cite{Ma} building on work by Enescu and Hochster \cite{EH}. The language used in \cite{EH} is in terms of $R\{F\}$-modules which are modules over a ring $R$ with a specified Frobenius action. For such a module $M$ with a distinguished Frobenius action $\rho \colon M \to F_* M$, a submodule $N \subset M$ is called \textit{$F$-compatible}, provided that $\rho(N) \subseteq F_*N$. Ma showed that $F$-split local rings  have local cohomology modules, which when equipped with the natural Frobenius action, satisfy an interesting condition, originally introduced in \cite{EH}.

\begin{definition}
$($\cite[Def. 4.6]{EH}$)$
Let $(R,\fm)$ be a local ring. An $R$-module $M$ with a Frobenius action $\rho$ is called \textit{anti-nilpotent}, provided for any  $F$-compatible submodule $N$ (i.e., $\rho(N) \subseteq F_* N$), the induced action of $\rho$ on $M/N$ is injective. 
\end{definition}

\begin{theorem}
\label{thm:FsplitFinj} 
Let $(R,\fm,k)$ be a local ring of characteristic $p > 0$ and $x \in \fm$ a regular element. If $H_\fm^i(R/xR)$ is anti-nilpotent for all $i \geq 0$ then $x$ is a surjective element. 
\end{theorem}

\begin{proof}
By definition, we need to check that the map $H_\fm^i(R/x^\ell R) \to H_\fm^i(R/xR)$, which is induced by the surjection $R/x^\ell R \to R/xR$, is surjective. Denote its cokernel by $C$. It suffices to show that $C = 0$. Consider the exact sequence $H_\fm^i(R/x^\ell R) \to H_\fm^i(R/xR) \to C \to 0$. Denote by $\rho_{\ell, i}^e \colon H_\fm^i(R/x^\ell R) \to F_*^e H_\fm^i(R/x^\ell R)$ the Frobenius linear map induced naturally by the Frobenius on $R$ composed with the natural surjection. 


The map $\rho_{1,i}^e$ induces a Frobenius linear map $C \to F_*^e C$ and denote this by $\rho_C^e$. These Frobenius linear maps fit together to give a commutative diagram with exact rows since $F_*^e$ is exact for all $e$. 

\begin{center}
\begin{tikzpicture}

\node (11) at (-4,4) {$H_\fm^i(R/x^\ell R)$};
\node (12) at (0,4){$H_\fm^i(R/xR)$};
\node (13) at (3,4){$C$};
\node (14) at (5,4){$0$};

\node (21) at (-4,2) {$F_*^e H_\fm^i(R/x^{\ell} R)$};
\node (22) at (0,2){$F_*^e H_\fm^i(R/x R)$};
\node (23) at (3,2){$F_*^e C$};
\node (24) at (5,2){$0$};

\draw[->] (11) edge (12);
\draw[->] (12) edge (13);
\draw[->] (13) edge (14);

\draw[->] (21) edge (22);
\draw[->] (22) edge (23);
\draw[->] (23) edge (24);

\draw[->] (11) edge node[left]{$\rho_{\ell,i}^e$} (21);
\draw[->] (12) edge node[left]{$\rho_{1,i}^e$} (22);
\draw[->] (13) edge node[left]{$\rho_C^e$}(23);
\draw[->] (14) edge (24);

\end{tikzpicture}
\end{center}

The image of $H_\fm^i(R/x^\ell R)$ in $H_\fm^i(R/xR)$ is certainly $F$-compatible. Since we assume $H_\fm^i(R/xR)$ is anti-nilpotent, the Frobenius action $\rho_C^e$ on $C$ is injective. Note also that when $e \gg 0$, the map $\rho_{1,i}^e$ factors as $$H_\fm^i(R/xR) \to F_*^e H_\fm^i(R/x^{p^e} R) \to F_*^e H_\fm^i(R/x^\ell R) \to F_*^e H_\fm^i(R/xR).$$ So we may define the map $\varphi$ making the following diagram commute:

\begin{equation}
\label{diag:Fpure}
\begin{tikzpicture}[baseline=(current bounding box.center)]

\node (11) at (-4,4) {$H_\fm^i(R/x^\ell R)$};
\node (12) at (0,4){$H_\fm^i(R/xR)$};
\node (13) at (3,4){$C$};
\node (14) at (5,4){$0$};

\node (21) at (-4,2) {$F^e_* H_\fm^i(R/x^{\ell} R)$};
\node (22) at (0,2){$F^e_* H_\fm^i(R/x R)$};
\node (23) at (3,2){$F^e_* C$};
\node (24) at (5,2){$0$};

\draw[->] (11) edge (12);
\draw[->] (12) edge (13);
\draw[->] (13) edge (14);

\draw[->] (21) edge (22);
\draw[->] (22) edge (23);
\draw[->] (23) edge (24);

\draw[->] (11) edge node[left]{$\rho_{\ell,i}^e$} (21);
\draw[->] (12) edge node[left]{$\rho_{1,i}^e$} (22);
\draw[->] (13) edge node[left]{$\rho_C^e$}(23);
\draw[->] (14) edge (24);

\draw[->] (12) edge node[above]{$\varphi$}(21);

\end{tikzpicture}
\end{equation}

We show that $C=0$ by utilizing a diagram chase on \eqref{diag:Fpure}. Let $z \in C$. As such, it has a preimage $z' \in H_\fm^i(R/xR)$. By commutativity of the diagram, it follows that $\rho_{1,i}^e( z')$ has preimage $z''=\varphi(z')$. As the bottom row is exact, $z''$ maps to $\rho^e_C(z)$ which is zero. However $\rho^e_C$ was shown to be injective, this implies that $z=0$ and therefore $C=0$, as desired.  
\end{proof}

\begin{remark}
The proof of Theorem~\ref{thm:FsplitFinj} can be modified to show that when the natural Frobenius linear map $H_\fm^i(R/xR) \to F_*H_\fm^i(R/xR)$ is surjective up to $F_*R$-span, for each $\ell > 0$ the map $H_\fm^i(R/x^\ell R) \to H_\fm^i(R/xR)$ is surjective. 
\end{remark}

\begin{corollary}
\label{cor:FsplitFinj}
Let $(R,\fm,k)$ be a local ring of characteristic $p > 0$ and $x \in \fm$ a regular element. If $R/xR$ is $F$-split then $R$ is $F$-injective. 
\end{corollary}
\begin{proof}
Since $R/xR$ is $F$-split, the module $H_\fm^i(R/xR)$ is anti-nilpotent for all $i \geq 0$ by \cite[Thm. 3.7]{Ma} and so Theorem~\ref{thm:FsplitFinj} gives that $x$ is a surjective element. The rest follows by Theorem~\ref{thm:MainTheorem}. 
\end{proof}

\begin{remark}
We note when the residue field is perfect and $R/xR$ is $F$-injective has FLC then $H_\fm^i(R/xR)$ is anti-nilpotent for all $i < \dim R/xR$  as $H_\fm^i(R/xR)$ is a finite dimensional $k$-vector space and so Frobenius acts injectively. Thus one may use Theorem~\ref{thm:FsplitFinj} to replace the role of Lemma~\ref{lem:surj} in the proof of Corollary~\ref{cor:FLCFINJ}. 
\end{remark}

\begin{remark}
Under an $F$-finite assumption, Theorem \ref{thm:FsplitFinj} says that $F$-purity deforms to $F$-injectivity. Enescu obtained some results on this finiteness property on local cohomology modules of finite length \cite[Thm. 7.14]{Ene}. 
\end{remark}

\begin{example}
A particularly well-known example where $F$-purity fails to deform was introduced by Fedder \cite{Fed} see also \cite[Example 3.2]{Singha}. In particular, the ring $$R:=\mathbf{F}_p[[X,Y,Z,W]]/(XY,XW,W(Y-Z^2))$$ is not $F$-pure, but $R/ZR=\mathbf{F}_p[[X,Y,W]]/(XY,XW,WY)$ is known to be $F$-pure \cite[Prop. 5.38]{HR76}. This also means that our main result serves as a way for checking $F$-injectivity by taking specialization, $i.e.,$ checking that $R/ZR$ is $F$-pure.
\end{example}

\appendix\section{$F$-injectivity and depth}

\begin{center} by Karl Schwede and Anurag K. Singh \end{center}

\

Our goal here is to prove a prime characteristic analog of a result of Koll\'ar and Kov\'acs, \cite[Theorem~7.12]{Kollar-Kovacs}: if $X\to B$ is a flat family with Du~Bois fibers, such that the generic fiber is Cohen-Macaulay (respectively $S_k$), then all fibers of the map~$X\to B$ are Cohen-Macaulay (respectively $S_k$). The prime characteristic version of this is Theorem~\ref{theorem:finj:main} below. As applications of this theorem, we extend a result of Fedder and Watanabe~\cite[Proposition~2.13]{FedderWatanabe} to the case where $R$ is not a priori assumed to be Cohen-Macaulay, see Corollary~\ref{cor:finj:frat}, and also obtain a new result on the deformation of $F$-injectivity, Corollary~\ref{cor:finj:def}.

We begin with some preliminary observations:

\begin{lemma}
\label{lemma:finj:def}
Let $(R,\fm)$ be a local ring; set $d$ to be the depth of $R$. Suppose there exists a regular element $f$ in $R$ such that the Frobenius action on $H^{d-1}_\fm(R/fR)$ is injective. Then the map
\[
\CD
H^d_\fm(R)@>f^{p-1}F>>H^d_\fm(R)\,
\endCD
\]
is injective; in particular, the Frobenius action on $H^d_\fm(R)$ is injective.
\end{lemma}

\begin{proof}
Consider the commutative diagram with exact rows:
\[
\CD
0@>>>R@>f>>R@>>>R/fR@>>>0\phantom{\,.}\\
@. @VVf^{p-1}FV @VVFV @VVFV \\
0@>>>R@>f>>R@>>>R/fR@>>>0\,.
\endCD
\]
Since $R/fR$ has depth $d-1$, applying the functor $H^\bullet_\fm(\ \ )$ yields the diagram
\[
\CD
0@>>>H^{d-1}_\fm(R/fR)@>>>H^d_\fm(R)@>f>>H^d_\fm(R)@>>>H^d_\fm(R/fR)\phantom{\,.}\\
@. @VVFV @VVf^{p-1}FV @VVFV @VVFV\\
0@>>>H^{d-1}_\fm(R/fR)@>>>H^d_\fm(R)@>f>>H^d_\fm(R)@>>>H^d_\fm(R/fR)\,.
\endCD
\]
The map $f^{p-1}F$ is injective if and only if it is injective when restricted to the socle of $H^d_\fm(R)$. The socle is annihilated by $f$, and thus lies in the image of $H^{d-1}_\fm(R/fR)$. But the Frobenius action on $H^{d-1}_\fm(R/fR)$ is injective by assumption.
\end{proof}

The next lemma is the main ingredient in the proof of Theorem~\ref{theorem:finj:main}. For a local ring $(R,\fm)$, we use $\PSpec R$ to denote the \emph{punctured spectrum} of~$R$, i.e., the set~$\Spec R\smallsetminus\{\fm\}$. The $F$-finite hypothesis in the sequel ensures the existence of a dualizing complex by Gabber, \cite[Remark~13.6]{Gabber}. By Kawasaki \cite[Cor 1.4]{Kaw02}, local rings possessing dualizing complexes are precisely those that are homomorphic images of Gorenstein local rings.

\begin{lemma}
\label{lemma:finj:isolated}
Let $(R,\fm)$ be an $F$-finite local ring. Suppose there exists a regular element $f$ in $R$ such that $R/fR$ is $F$-injective.

If $R_\fp$ satisfies the Serre condition $S_k$\, for each $\fp$ in $\PSpec R$, then $R$ satisfies $S_k$.
\end{lemma}

\begin{proof}
Let $d$ be the depth of $R$. If $R$ does not satisfy $S_k$, then $d<k$.

The module $H^d_\fm(R)$ is nonzero, but has finite length since $R_\fp$ satisfies $S_k$ for each prime ideal $\fp$ in $\PSpec R$. We claim that $\fm\,H^d_\fm(R)=0$. Because it has finite length, the module $H^d_\fm(R)$ is annihilated by $\fm^q$ for some $q=p^e$. For each $x\in\fm$ and $\eta\in H^d_\fm(R)$, it follows that $x^qF^e(\eta)=0$. But the Frobenius action on $H^d_\fm(R)$ is injective by Lemma~\ref{lemma:finj:def}, so $x\eta=0$, which proves the claim.

But then $f^{p-1}FH^d_\fm(R)=0$. Since $f^{p-1}F\colon H^d_\fm(R)\to H^d_\fm(R)$ is injective by Lemma~\ref{lemma:finj:def}, we must have $H^d_\fm(R)=0$, which is a contradiction.
\end{proof}

\begin{theorem}
\label{theorem:finj:main}
Let $R$ be an $F$-finite local ring. Suppose there exists a regular element $f$ in $R$ such that $R/fR$ is $F$-injective.

If the localization $R_f = R[f^{-1}]$ satisfies the Serre condition $S_k$\, for a positive integer $k$, then $R$ satisfies condition~$S_k$. In particular, if $R_f$ is Cohen-Macaulay, then $R$ is Cohen-Macaulay.
\end{theorem}

\begin{proof}
If not, take a prime $\fq$ that is minimal with respect to the property that $R_\fq$ does not satisfy $S_k$. As $R_f$ is $S_k$ by assumption, it follows that $f\in\fq$. Since it is a localization of an $F$-injective ring, the ring $(R/fR)_\fq=R_\fq/fR_\fq$ is $F$-injective, see, for example, \cite[Proposition~4.3]{Sch}. But $(R_\fq)_\fp$ satisfies condition $S_k$ for each prime ideal $\fp$ in $\PSpec R_\fq$, so $R_\fq$ satisfies $S_k$ by Lemma~\ref{lemma:finj:isolated}. This is a contradiction.
\end{proof}

The following corollary was proved as \cite[Proposition~2.13]{FedderWatanabe} under the additional hypothesis that $R$ is Cohen-Macaulay:

\begin{corollary}
\label{cor:finj:frat}
Let $R$ be an $F$-finite local ring. Suppose there exists a regular element $f$ in $R$ such that $R/fR$ is $F$-injective. If $R_f$ is $F$-rational, then $R$ is $F$-rational.
\end{corollary}

\begin{proof}
Theorem~\ref{theorem:finj:main} implies that $R$ is Cohen-Macaulay. But then $R$ is $F$-rational by \cite[Proposition~2.13]{FedderWatanabe}; Fedder and Watanabe require $R_f$ to be regular in the statement of the proposition, but their proof works verbatim if some power of $f$ is a parameter test element, and this is indeed the case by \cite[Theorem~1.13]{Velez}.
\end{proof}

Fedder~\cite[Theorem~3.4\,(1)]{Fed} proved that $F$-injectivity deforms in the case of Cohen-Macaulay rings; we extend this as follows:

\begin{corollary}
\label{cor:finj:def}
Let $R$ be an $F$-finite local ring. If $f\in R$ is a regular element such that $R/fR$ is $F$-injective, and $R_f$ is Cohen-Macaulay, then $R$ is $F$-injective.
\end{corollary}

\begin{proof}
Theorem~\ref{theorem:finj:main} implies that the ring $R$ is Cohen-Macaulay; we may then use~\cite[Theorem 3.4.1]{Fed}.
\end{proof}

\end{document}